\newcommand{\bin}[2]{\left( \begin{array}{c} #1\\ #2 \end{array} \right)}
\newcommand{\C}{\mathcal{C}}
\newcommand{\classes}{\mathrm{classes}}
\newcommand{\Des}{\mathrm{Des}}
\newcommand{\inv}{\mathrm{inv}}
\newcommand{\maj}{\mathrm{maj}}
\newcommand{\Lex}[1]{\mathrm{\textbf{Lex}}(#1)}
\newcommand{\M}{\mathcal{M}}
\newcommand{\Recoil}{\mathrm{Recoil}}
\newcommand{\sign}{\mathrm{sign}}
\newcommand{\rev}{\mathrm{rev}}
\def\FTT{\Phi}
\def\NFT{\mathrm{NS}}
\def\QSym{{\it QSym}}          
\def\FQSym{{\bf FQSym}}        
\numberwithin{equation}{section}
\newtheorem{theorem}{Theorem}
\newtheorem{proposition}{Proposition}
\newtheorem{lemma}{Lemma}
\newtheorem{corollary}{Corollary}
\theoremstyle{definition}
\newtheorem{remark}{Remark}
\newtheorem{example}{Example}
\begin{document}

\title{The forgotten monoid}

\author[J.-C. Novelli]{Jean-Christophe Novelli}
\address{Institut Gaspard Monge, Universit\'e Paris-Est,
5 Boulevard Descartes, Champs-sur-Marne, 77454 Marne-la-Vall\'ee cedex 2,
France}
\email{novelli@univ-mlv.fr}
\urladdr{http://www-igm.univ-mlv.fr/\~{}novelli}

\author[A. Schilling]{Anne Schilling}
\address{Department of Mathematics, University of California, One Shields
Avenue, Davis, CA 95616-8633, U.S.A.}
\email{anne@math.ucdavis.edu}
\urladdr{http://www.math.ucdavis.edu/\~{}anne}
\thanks{\textit{Date:} June 2007}
 
\begin{abstract}
We study properties of the forgotten monoid which appeared in work of
Lascoux and Sch\"utzenberger and recently resurfaced in the construction
of dual equivalence graphs by Assaf. In particular, we provide an
explicit characterization of the forgotten classes in terms of inversion
numbers and show that there are $n^2-3n+4$ forgotten classes in the symmetric
group $S_n$.
Each forgotten class contains a canonical element that can be
characterized by pattern avoidance. We also show that the sum of
Gessel's quasi-symmetric functions over a forgotten class is a 0-1 sum
of ribbon-Schur functions.
\end{abstract}

\maketitle

\section{Introduction}

The forgotten monoid appeared in a paper by Lascoux and 
Sch\"utzenberger~\cite{LS:1981} in their construction of the plactic monoid
and was subsequently forgotten. The current research has 
been motivated by the recent resurface of the forgotten monoid in
the definition of dual equivalence graphs in Assaf's study of LLT
polynomials and Macdonald theory~\cite{Assaf:2007}.

In their seminal paper~\cite{LS:1981}, Lascoux and
Sch\"utzenberger introduced the well-known \emph{plactic monoid} as a monoid
that allows noncommutative symmetric elementary symmetric functions to
commute. Their main goal was to obtain a combinatorial proof that the
product of two Schur functions decomposes as a sum of Schur functions with
nonnegative integer coefficients, also known as the Littlewood--Richardson
coefficients.

Their idea can be rephrased by starting with a \emph{noncommutative} analog of
the algebra of symmetric functions. The usual commutative symmetric functions 
are obtained by imposing certain \emph{minimal commuting conditions} on the
variables. Working with this noncommutative algebra on partially commuting
variables makes the computation of products of functions easier and more
precise.
This approach led to a very simple proof of the Littlewood--Richardson rule
with the help of Hopf algebras~\cite{DHT:2002}.

Elementary symmetric functions generate the algebra of symmetric
functions. Hence, in order to obtain a partially commuting algebra isomorphic
to its commutative version, one only needs to define noncommutative analogues
of the elementary symmetric functions and ask that their products commute. The
usual definition is as follows: given a (possibly infinite) ordered alphabet
$A=\{a_1<\dots<a_n\}$, let
\begin{equation}
e_k(A) := \sum_{n\geq i_1>\dots>i_k\geq1} a_{i_1} a_{i_2} \dots a_{i_k}.
\end{equation}
The first nontrivial condition that is imposed by commutativity comes from the
relation $e_1e_2=e_2e_1$.
Since this relation needs to hold for an alphabet with two letters, it
requires
\begin{equation}
\label{leteg}
(a_1+a_2) a_2 a_1 = a_2 a_1 (a_1+a_2).
\end{equation}
But since commutativity is also required to be true for an alphabet with three
letters, the relation
\begin{equation}
a_2 a_3a_1 + a_1 a_3a_2 = a_3a_1 a_2 + a_2a_1 a_3,
\end{equation}
must hold. All the other terms of the expansion simplify, either by appearing
on both sides of the equality, or thanks to relation~\eqref{leteg}.

At this point, Lascoux and Sch\"utzenberger make the choice of what are now
known as the \emph{Knuth relations}, introduced by Knuth~\cite{Knuth:1970}
in his study of the Schensted algorithm~\cite{Schensted:1961}:
\begin{equation}
\left\{
\begin{array}{llr}
acb &\equiv cab & \text{for all $a\leq b<c$,} \\
bac &\equiv bca & \text{for all $a<b\leq c$.}
\end{array}
\right.
\end{equation}
It is then easy to show that the quotient of the noncommutative algebra
generated by the $e_k$ by the Knuth (or, plactic) relations on the variables
makes the $e_k$ commute with one another, and hence provides an algebra
isomorphic to the usual commutative symmetric functions.
These relations have been weakened later by Fomin and Greene~\cite{FG:1998}.

But, as we shall see in the sequel, there is another choice of relations that
imposes commutativity of the $e_k$:
\begin{equation}
\label{eq:forgottengen}
\left\{
\begin{array}{rll}
aba &\equiv baa & \text{for all $a<b$,} \\
bab &\equiv bba & \text{for all $a<b$,} \\
acb &\equiv bac & \text{for all $a<b<c$,} \\
bca &\equiv cab & \text{for all $a<b<c$.}
\end{array}
\right.
\end{equation}

To the best of our knowledge, these relations have not been studied in
the literature and hence are called the \emph{forgotten relations} in the
folklore. Considering these relations as equivalence relations on words, it is then
legitimate to make use of the term \emph{forgotten equivalence classes}, or
forgotten classes for short.
In the sequel, we shall generally consider the forgotten classes on
permutations, that is, the classes generated only by the two relations:
\begin{equation}
\label{eq:forgotten}
\left\{
\begin{array}{rll}
acb &\equiv bac & \text{for all $a<b<c$,} \\
bca &\equiv cab & \text{for all $a<b<c$.}
\end{array}
\right.
\end{equation}

For example, here is a complete forgotten class at $n=5$:
\begin{equation}
\begin{split}
\{ & 12543, 13452, 13524, 14253, 14325, 15234, 21453,\\
   & 21534, 23154, 23415, 24135, 31254, 31425, 32145, 41235\}.
\end{split}
\end{equation}

Since the forgotten relations provide some rewritings on consecutive letters,
the quotient of the free algebra on $A$ has naturally a structure of monoid
which we shall call the \emph{forgotten monoid}.

Our initial motivation to study the forgotten monoid is the appearance of the
forgotten relations in the construction of dual equivalence graphs by
Assaf~\cite{Assaf:2007}, based on ideas of Haiman~\cite{Haiman:1992}. Assaf
considers two sets of relations on permutations:
\begin{equation}
\left\{
\begin{array}{rl}
\dots i-1 \dots i+1 \dots i \dots &\equiv^* \dots i \dots i+1 \dots i-1 \dots\\
\dots i \dots i-1 \dots i+1 \dots &\equiv^* \dots i+1 \dots i-1 \dots i \dots
\end{array}
\right.
\end{equation}

\begin{equation}
\label{eq:coforgotten}
\left\{
\begin{array}{rl}
\dots i+1 \dots i-1 \dots i \dots &\equiv^* \dots i \dots i+1 \dots i-1 \dots\\
\dots i \dots i-1 \dots i+1 \dots &\equiv^* \dots i-1 \dots i+1 \dots i \dots
\end{array}
\right.
\end{equation}
The first set consists of the plactic relations on the inverses of the
permutations (also known as the coplactic relations) and the second set
consists of the forgotten relations on the inverse permutation, hence called
the \emph{coforgotten relations}.

In this paper, we show in Theorem~\ref{thm:classes} that the number of
forgotten classes of the symmetric group on $n$ letters $S_n$ is $n^2-3n+4$.
More precisely, two permutations are forgotten-equivalent if and only if they
have the same number of inversions and the letters $1$ and $n$ appear in the
same order in both permutations. In addition, we characterize canonical
elements in each forgotten class by pattern avoidance. It turns out that these
canonical elements are the lexicographically minimal elements in each class.
Theorem~\ref{thm:commut} shows that the quotient of the algebra
generated by the $e_k$ by the forgotten relations is indeed isomorphic to the
algebra of commutative symmetric functions.

Using our characterization of the forgotten classes in terms of inversions we
also prove a conjecture by Zabrocki~\cite{Zabrocki:2007}, independently proved by 
Assaf~\cite{Assaf:2007a}, namely that the sum of Gessel's quasi-symmetric
functions over a forgotten class is 0-1 sum of ribbon Schur functions and
hence is a symmetric function. In addition we give a precise combinatorial
description of the ribbons that appear in the sum. The precise result is stated in
Theorem~\ref{thm:ribbon}, 

The paper is organized as follows. Section~\ref{sec:invariances} provides
\emph{invariants} on the forgotten classes that take at least $n^2-3n+4$
values, hence showing that the number of forgotten classes in $S_n$ is
$\classes(n)\ge n^2-3n+4$.  In Section~\ref{sec:patterns} we provide a set of
so-called \emph{canonical elements} using pattern avoidance and shows that
each permutation can be rewritten as one of these. Since there are $n^2-3n+4$
canonical elements, this shows that $\classes(n)\le n^2-3n+4$. In
Section~\ref{sec:characterization} we state Theorem~\ref{thm:classes} about
the characterization of the forgotten classes in terms of the inversion number
and give many properties which are consequences of this theorem.
In Section~\ref{sec:insertion} an iterative algorithm for the forgotten
relations is presented, analogous to the Schensted insertion algorithm for the
plactic monoid (see~\cite{Schensted:1961} for the original algorithm).
Section~\ref{sec:quotient} uses standard properties to prove
Theorem~\ref{thm:commut}.
The last section is devoted to the proof of Theorem~\ref{thm:ribbon} which
gives the expansion of sums of Gessel's quasi-symmetric functions in terms of
ribbon Schur functions.

\subsection*{Acknowledgments}
The authors would like to thank Sami Assaf, Alain Lascoux, and Mike Zabrocki for interesting
discussions. We would also like to thank Adriano Garsia for careful reading of this manuscript
and pointing out a mistake in one of the arguments in an earlier version of this paper.
JCN was partially supported by the ANR project hopfcombop.
AS was partially supported by NSF grants DMS-0501101, DMS-0652641, and
DMS-0652652.

\section{Invariants}
\label{sec:invariances}

Let $S_n$ denote the set of permutations on $n$ letters. An inversion of a
permutation $\sigma=\sigma_1 \sigma_2 \dots \sigma_n \in S_n$ is a pair
$\{\sigma_i,\sigma_j\}$ such that $\sigma_i>\sigma_j$ but $i<j$. The inversion
number $\inv(\sigma)$ is the number of inversions in $\sigma$.
In this section, we shall only consider the \emph{standard} forgotten classes,
that is the forgotten classes on permutations.

\begin{remark}
\label{rem:inversion}
The forgotten elementary equivalence relations~\eqref{eq:forgotten}
preserve the inversion number of the permutations. Hence the inversion number
is an invariant of each class.
\end{remark}

\begin{remark}
\label{rem:inv}
Directly from the forgotten relations, we have that in each forgotten class of
$S_n$, either $\{1,n\}$ forms an inversion or not.
\end{remark}

\begin{example}
Here is a list of the forgotten classes for $n=2,3,4$ together with the
inversion numbers
\begin{equation}
\begin{array}{|c|l|l|}
\hline 
n & \text{class} &\inv\\ \hline
2 & 12  & 0\\
   & 21 & 1\\ \hline
3 & 123 & 0 \\
   & 132, 213 & 1\\
   & 231, 312 & 2\\
   & 321 & 3\\ \hline
4 & 1234 & 0\\
   & 1243, 1324, 2134 & 1\\
   & 1342, 1423, 2143, 2314, 3124 & 2\\
   & 1432, 3142, 3214 & 3\\
   & 2341, 2413, 4123 & 3\\
   & 2431, 3241, 3412, 4132, 4213 & 4\\
   & 3421, 4231, 4312 & 5\\
   & 4321 & 6\\
\hline
\end{array}
\end{equation}
\end{example}

The classes for which $\{1,n\}$ does not form an inversion are in one-to-one
correspondence with the classes for which $\{1,n\}$ forms an inversion by
the operation $\rev$ reading the word from right to left. It is clear
from~\eqref{eq:forgotten} that $\rev(\sigma)\equiv \rev(\sigma')$ if
$\sigma\equiv \sigma'$.

\begin{proposition}
\label{prop:lower}
The number of forgotten classes satisfies $\classes(n)\ge n^2-3n+4$.
\end{proposition}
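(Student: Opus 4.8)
The plan is to convert the two invariants of Remarks~\ref{rem:inversion} and~\ref{rem:inv} into a counting lower bound. Since the pair consisting of the inversion number $\inv(\sigma)$ together with the relative order of the letters $1$ and $n$ is constant on each forgotten class, the number of classes is at least the number of such invariant-pairs that are actually realized by some $\sigma\in S_n$. So it suffices to exhibit $n^2-3n+4$ distinct pairs (inversion number, order of $1$ and $n$), each attained by at least one permutation.

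First I would analyze the permutations in which $1$ precedes $n$, and show that their inversion number ranges over exactly $\{0,1,\dots,\binom{n}{2}-(n-1)\}$. For achievability of every value in this range I would start from the identity and reverse its first $n-1$ letters one adjacent transposition at a time, keeping $n$ fixed in the last position: each step keeps $1$ before $n$ and raises $\inv$ by exactly $1$, sweeping through all integers from $0$ up to $\binom{n-1}{2}=\binom{n}{2}-(n-1)$ without skipping any. (That $\binom{n}{2}-(n-1)$ is also the maximum follows from a short forced-non-inversion count: if $1$ is in position $i$ and $n$ in position $j$ with $i<j$, then the $j-1$ pairs $\{x,n\}$ with $x$ before $n$ and the $n-i$ pairs $\{1,y\}$ with $y$ after $1$ are non-inversions overlapping only in $\{1,n\}$, forcing at least $(j-1)+(n-i)-1=n-1$ non-inversions, minimized when $1,n$ are adjacent. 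This upper bound is not needed for the inequality, only for identifying the exact range.)

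Next I would pass to the permutations in which $n$ precedes $1$ via the reversal map $\rev$. As noted in the paragraph preceding this proposition, $\rev$ sends a class to a class and interchanges the two orders of $1$ and $n$; moreover it sends a permutation with $k$ inversions to one with $\binom{n}{2}-k$ inversions. Hence the values $\{0,\dots,\binom{n}{2}-(n-1)\}$ realized with $1$ before $n$ are carried bijectively to the values $\{n-1,\dots,\binom{n}{2}\}$ realized with $n$ before $1$. Each of the two orders thus contributes $\binom{n}{2}-(n-1)+1=\binom{n}{2}-n+2$ realized pairs, and the two families are disjoint since they differ in the order component, giving $2\bigl(\binom{n}{2}-n+2\bigr)=n^2-3n+4$ distinct realized pairs and hence $\classes(n)\ge n^2-3n+4$.

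The only genuinely non-routine step is the achievability claim under the order constraint, namely that $\inv$ hits every integer in its range rather than skipping values; the explicit adjacent-transposition path above is what makes this transparent, and the $\rev$ symmetry then removes any need to repeat the work for the opposite order. The maximal-inversion computation and the final arithmetic are straightforward bookkeeping, and it is reassuring that the resulting count matches the $n=2,3,4$ data tabulated in the Example.
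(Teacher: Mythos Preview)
Your proof is correct and follows essentially the same strategy as the paper: use the invariants of Remarks~\ref{rem:inversion} and~\ref{rem:inv} to separate classes, determine the range of inversion numbers on each side of the $1$-vs-$n$ dichotomy, and appeal to the $\rev$ symmetry for the second half. The only notable difference is that you are more explicit about why \emph{every} intermediate inversion value is attained (your adjacent-transposition path with $n$ fixed in last position), whereas the paper simply exhibits the extremal permutation $1\,n\,(n{-}1)\cdots 2$ and asserts the intermediate values are easy; your upper-bound count via forced non-inversions is also a mild rephrasing of the paper's ``each of the $n-2$ remaining letters fails to invert with at least one of $1,n$'' argument.
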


\begin{proof}
To get a lower bound on $\classes(n)$, we have to find what number of
inversions have permutations where $1$ is before $n$, and permutations where
$n$ is before~$1$.

The number of inversions of a permutation in $S_n$ can be any number from $0$
to $n(n-1)/2$. If we restrict ourselves to permutations where 1 is before
$n$, then those permutations can have at most $n(n-1)/2-(n-1)=(n-1)(n-2)/2$
inversions since each of the $n-2$ remaining numbers is either to the right of
1 or to the left of $n$, hence having no inversion with at least one of these,
and $\{1,n\}$ is not an inversion.
It is easy to see that $1n(n-1)\dots 2$ has inversion number $(n-1)(n-2)/2$
so that permutations where 1 is before $n$ can have any number of inversions
between $0$ and $(n-1)(n-2)/2$.
This amounts to $1+(n-2)(n-1)/2$ different sets of permutations.

By the symmetry $\rev$ that reads the words from right to left, the same
reasoning works for permutations having $n$ before 1, so that we finally get
$2+(n-2)(n-1)=n^2-3n+4$ sets of permutations such that each forgotten class
belongs entirely to one set.
Hence $\classes(n)\ge (n-1)(n-2)+2 = n^2-3n+4$.
\end{proof}

\section{Canonical elements and pattern avoidance}
\label{sec:patterns}

We consider the set $\Lex{n}$ of permutations defined by pattern avoidance:
they are the elements of $S_n$ 
avoiding the four patterns 213, 312, 13452, and 34521.
A permutation $\sigma=\sigma_1 \sigma_2 \cdots \sigma_n$ avoids the patterns 213 
if there is no subword $\sigma_{i_1} \sigma_{i_2} \sigma_{i_3}$ with $i_1<i_2<i_3$ such
that the relative values of $\sigma_{i_1}$, $\sigma_{i_2}$, and $\sigma_{i_3}$ are the 
same as in 213, that is $\sigma_{i_2}<\sigma_{i_1}<\sigma_{i_3}$. Avoidance of the patterns
312, 13452, and 34521 are defined in same way.

\begin{example}
Here are the elements of $\Lex{n}$ for all $n\leq5$.
\begin{equation}
1;\qquad  12,\ 21; \qquad
123,\ 132,\ 231, \ 321
\end{equation}
\begin{equation}
1234,\ 1243,\ 1342,\ 1432,\ 2341,\ 2431,\ 3421,\ 4321
\end{equation}
\begin{equation}
\begin{split}
&12345,\ 12354,\ 12453,\ 12543,\ 13542,\ 14532,\ 15432, \\
&23451,\ 23541,\ 24531,\ 25431,\ 35421,\ 45321,\ 54321.
\end{split}
\end{equation}
\end{example}

The elements of $\Lex{n}$ can be characterized as follows. Since they
avoid the patterns 213 and 312, they have to be of $\Lambda$-shape, meaning
that they first increase and then decrease.
More precisely
\begin{equation}
\sigma_1 < \sigma _2 < \dots < \sigma_k > \sigma_{k+1} > \dots > \sigma_n
\end{equation}
for some $k$, where $\sigma=\sigma_1\cdots \sigma_n$.
To simplify the presentation, from now on, we will only write the increasing
part of those elements, being understood that the remaining numbers are in
decreasing order to the right of $n$.
Since in addition they avoid the patterns 13452 and 34521 they cannot have a
gap in consecutive numbers in a position greater than 1 followed by three
increasing numbers. Hence the elements of $\Lex{n}$ are of one of the
following forms:
\begin{equation} \label{eq:lex}
\begin{split}
   & 1 2 \dots k \;n \;\\
   & 1 2 \dots k \;a \;n \;\\
   & 2 3 \dots k \;n \;\\
   & 2 3 \dots k \;a \; n \;\\
\end{split}
\end{equation}
where $1\le k<n$ in cases one and three, and $1\le k<n-2$ and $k+1<a<n$ in
cases two and four.

\begin{proposition}
\label{prop:count}
The set $\Lex{n}$ contains $n^2-3n+4$ elements.
\end{proposition}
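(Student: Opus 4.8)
The plan is to count the four families of permutations listed in equation~\eqref{eq:lex} and show that together they give exactly $n^2-3n+4$ elements, being careful to subtract off the overlaps between the families. Since the elements of $\Lex{n}$ are entirely determined by their increasing part (everything after the maximum $n$ being forced into decreasing order), I only need to enumerate the possible increasing prefixes of each of the four shapes.

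First I would count each family separately according to the stated ranges. The first family $1\,2\,\dots\,k\,n$ is parametrized by $k$ with $1\le k<n$, giving $n-1$ elements; note that when $k=n-1$ this is the identity-type prefix $1\,2\,\dots\,(n-1)\,n$. The third family $2\,3\,\dots\,k\,n$ is parametrized similarly by $1\le k<n$, again yielding $n-1$ elements. For the second family $1\,2\,\dots\,k\,a\,n$ the parameters run over $1\le k<n-2$ and $k+1<a<n$; for each fixed $k$ the value $a$ ranges over the integers strictly between $k+1$ and $n$, giving $n-k-2$ choices, so the total is $\sum_{k=1}^{n-3}(n-k-2)=\sum_{j=1}^{n-3} (j-1) = \binom{n-3}{2}+\dots$, which I would evaluate to $(n-2)(n-3)/2$. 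The fourth family $2\,3\,\dots\,k\,a\,n$ has the same parameter ranges and hence the same count $(n-2)(n-3)/2$.

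The main obstacle, and the step requiring the most care, is handling the overlaps and degenerate cases so as not to double-count. The potential coincidences are: (i) the families starting with $1$ and those starting with $2$ overlap only at very small prefixes --- specifically the shapes $2\,3\,\dots\,k\,n$ and $2\,3\,\dots\,k\,a\,n$ never begin with $1$, so they are disjoint from the first two families except possibly in trivial boundary instances where $k$ is so small that the prefix degenerates; (ii) within the ``one-bump'' versus ``two-bump'' shapes, I must confirm that a shape of the form $1\,2\,\dots\,k\,n$ is never equal to one of the form $1\,2\,\dots\,k'\,a\,n$, which holds because the latter has a gap (namely $a>k'+1$) forcing a missing value that the former does not have. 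I would verify each boundary value of $k$ and $a$ explicitly for small ranges to ensure the stated constraints $k<n-2$ and $k+1<a<n$ exactly carve out the non-degenerate two-bump prefixes and do not secretly re-list a one-bump prefix.

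Assembling the counts, the total is
\begin{equation}
(n-1)+(n-1)+\frac{(n-2)(n-3)}{2}+\frac{(n-2)(n-3)}{2}
 = 2(n-1)+(n-2)(n-3),
\end{equation}
and expanding gives $2n-2+n^2-5n+6=n^2-3n+4$, as desired. The remaining work is purely to justify that the four families are pairwise disjoint once the degenerate prefixes are correctly attributed, so that the cardinalities simply add; I expect this bookkeeping, rather than any substantive combinatorics, to be where the genuine effort lies. An alternative, cleaner route I would keep in reserve is to invoke Proposition~\ref{prop:lower} together with the surjection from canonical elements onto forgotten classes implicit in the section's setup, which would pin down $|\Lex{n}|=n^2-3n+4$ by a counting argument matching the lower bound; but the direct enumeration above is self-contained and does not rely on the equivalence between canonical elements and classes being established first.
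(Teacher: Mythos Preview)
Your approach is essentially the same as the paper's: both count the four families in~\eqref{eq:lex} directly. The paper streamlines the count slightly by observing that families three and four are obtained from families one and two by cycling the letter $1$ to the end of the word, so their cardinalities agree without a separate computation; you instead count all four families and check disjointness by hand, which is equally valid. One small slip: your substitution $\sum_{k=1}^{n-3}(n-k-2)=\sum_{j=1}^{n-3}(j-1)$ is off (the right-hand side should be $\sum_{j=1}^{n-3} j$), though your final value $(n-2)(n-3)/2$ is correct.
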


\begin{proof}
Let us use the characterization in~\eqref{eq:lex}. For case one
in~\eqref{eq:lex}, there are $n-1$ choices for $k$.
For case two in~\eqref{eq:lex}, there are $n-k-2$ choices for each $a$ for a
given $k$, and hence $\sum_{k=1}^{n-2} (n-k-2) = (n-2)^2 - (n-1)(n-2)/2$
choices in total. Hence altogether there are $(n^2-3n+4)/2$ permutations of
form one and two.
The permutations belonging to cases three and four are just the permutations
of form one and two where 1 has been cycled to the end of the word. Hence the
total number of permutations in $\Lex{n}$ is $n^2-3n+4$ as claimed.
\end{proof}

\begin{lemma}
\label{lem:minmax}
Let $F$ be a standard forgotten class of $S_n$. Then $F$ contains either a word 
that begins with $1$, or with $n$. Both cases cannot appear simultaneously inside $F$.
Moreover, if there is a word beginning with $1$ (resp. $n$) in $F$, then there
is a word ending with $n$ (resp. $1$).
\end{lemma}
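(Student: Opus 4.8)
The plan is to reduce the four assertions to a single one by exploiting symmetries of the forgotten relations, and then to prove that one assertion by an extremal argument on the position of the letter $1$.

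First I would record two order-reversing symmetries that preserve forgotten classes. The map $\rev$ is already known to do so; in addition, the complementation $\sigma_i \mapsto n+1-\sigma_i$ sends the relation $acb\equiv bac$ to $cab\equiv bca$ and vice versa, and hence also maps classes to classes. Both $\rev$ and complementation interchange the two situations of Remark~\ref{rem:inv} (whether or not $\{1,n\}$ is an inversion), and they permute the four target properties ``begins with $1$'', ``begins with $n$'', ``ends with $1$'', ``ends with $n$'' among themselves, together with their hypotheses. Consequently the group generated by $\rev$ and complementation carries the single statement ``if $1$ occurs before $n$ in $F$, then $F$ contains a word beginning with $1$'' onto all four cases, so it suffices to prove this one statement. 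The dichotomy is then immediate: a word beginning with $1$ forces $1$ before $n$, a word beginning with $n$ forces $n$ before $1$, and by Remark~\ref{rem:inv} a class lies entirely in one of these two regimes, so both cannot occur; existence of at least one such word follows from the reduced statement together with its complement.

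To prove the reduced statement, suppose $1$ occurs before $n$ and choose a word $w\in F$ minimizing the position $p$ of the letter $1$; I would argue that $p=1$. If $p\ge 2$, write the neighbours of $1$ as $x$ (immediately to the left) and $y$ (immediately to the right, which exists since $1$ precedes $n$). If $x<y$, then the triple $x1y$ rewrites as $1yx$ by~\eqref{eq:forgotten}, lowering the position of $1$ and contradicting minimality. The remaining case $x>y$ is the crux. Here I would use that $n>x$ lies to the right of $1$: let $m$ be the leftmost letter to the right of $1$ exceeding $x$, so that every letter strictly between $1$ and $m$ is smaller than $x$. Sliding $1$ rightwards past each such small letter $b$ via $x1b\equiv bx1$ brings the configuration to $\cdots x\,1\,m\cdots$ with $x$ still adjacent to $1$; then $x1m\equiv 1mx$ moves $1$ to the left of $x$, and finally $1$ can be pulled leftwards past each accumulated small letter $b$ via $b1m\equiv 1mb$ (legal because $b<m$). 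The net effect places $1$ immediately after the original prefix that preceded $x$, that is, at position $p-1$, again contradicting minimality. Hence $p=1$.

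The main obstacle is precisely this escape in the case $x>y$: a single forgotten move cannot decrease the position of $1$, so one is forced to move $1$ temporarily to the right before moving it left, and the delicate point is to verify that every move in the slide is a legal instance of~\eqref{eq:forgotten} and that the bookkeeping of positions indeed produces a net leftward displacement. Once the reduced statement is established, the ``moreover'' part follows by applying the composite reverse-complement symmetry, which fixes the inversion regime while turning ``begins with $1$'' into ``ends with $n$''.
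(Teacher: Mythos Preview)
Your proof is correct and takes a genuinely different route from the paper's. The paper argues by induction on $n$: writing $\sigma=x.w.y$, it strips off the last letter $y$, applies the inductive hypothesis to $x.w$, and in the residual case where $y=n$ and $1$ has been pushed to the end of $x.w$, observes that the block $1n$ can be swept leftwards across all remaining letters. You instead give a direct, non-inductive extremal argument: pick a representative minimizing the position of $1$, and in the obstructed case $x>y$ perform an explicit zig-zag of forgotten moves (slide $x1$ rightwards over the small letters until it meets $m$, flip $x1m$ to $1mx$, then slide $1m$ back leftwards) to produce a net leftward displacement of $1$.

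Both arguments are short, but they buy different things. The paper's induction is conceptually cleaner and reuses the lemma itself at smaller size, at the cost of implicitly invoking standardization (since $x.w$ is not literally a permutation of $S_{n-1}$) and simultaneously inducting on the ``moreover'' clause. Your argument is entirely self-contained and exhibits the actual chain of elementary relations, which is a nice bonus; the price is the bookkeeping of positions in the zig-zag, which you handle correctly. Your use of the complementation symmetry (not just $\rev$) to collapse all four assertions into one is also a tidy improvement over the paper's reduction, which only invokes $\rev$.
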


\begin{proof}
We know that in each forgotten class either $\{1,n\}$ is an inversion or not.
Hence $F$ cannot have both words starting with $1$ and starting with $n$. 
Moreover, the last statement is a consequence of the first statement since the
forgotten elementary rewritings~\eqref{eq:forgotten} are invariant under the
reversal $\rev$ of words.
This shows that every forgotten class must have an element ending with $1$ or
$n$.
Because of the property that in each forgotten class $\{1,n\}$ is either an
inversion or not, the third statement follows.

We now prove the first statement. Let $\sigma$ be a permutation where $1$ is 
before~$n$. 
Write $\sigma$ as $x.w.y$ where $x$ (resp. $y$) is its first (resp. last)
letter, and $w$ the remaining part of $\sigma$. Then, if $y\not=n$, we are 
done by induction on the length of $\sigma$ since the desired property holds 
for $x.w$. If $y=n$, then by induction, depending on the respective order of
$1$ and $n-1$ in $x.w$, the $1$ can be put either to the beginning of $x.w$
(we are then done) or to the end of $x.w$. The last situation means there
would be a word congruent to $\sigma$ that ends with $1n$. But in this case
$1n$ can be moved to the left past all other letters via the forgotten
relations.
Hence in all cases, there exists a word congruent to $w$ that starts with $1$.
\end{proof}

\begin{proposition}
\label{prop:canonical}
The minimal lexicographic element of each forgotten class is an element of
$\Lex{n}$.
\end{proposition}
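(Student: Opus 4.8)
The plan is to prove the contrapositive one forbidden pattern at a time: if the lexicographically minimal element $\sigma$ of its class contained one of $213$, $312$, $13452$, $34521$, I would exhibit a forgotten-equivalent word that is strictly smaller in lexicographic order, contradicting minimality. The engine is the observation that the two relations in~\eqref{eq:forgotten}, read as the rewritings $bac\to acb$ and $cab\to bca$ (for $a<b<c$), each replace the first letter of the active window by a smaller one and hence strictly decrease a word lexicographically. A lexicographically minimal $\sigma$ therefore admits no lowering rewrite; in particular no three consecutive letters of $\sigma$ form $bac$ or $cab$.

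First I would dispose of $213$ and $312$. A consecutive valley $\sigma_{j-1}>\sigma_j<\sigma_{j+1}$ has active window $bac$ when $\sigma_{j-1}<\sigma_{j+1}$ and $cab$ when $\sigma_{j-1}>\sigma_{j+1}$, so in either case a lowering rewrite applies. Hence $\sigma$ has no consecutive valley, which for a permutation is equivalent to being unimodal, $\sigma_1<\dots<\sigma_t=n>\dots>\sigma_n$, i.e. to avoiding both $213$ and $312$. This is the clean, purely local part of the argument.

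The patterns $13452$ and $34521$ are the crux, and here a single rewrite will not suffice. Working in the case where $1$ precedes $n$ (by Lemma~\ref{lem:minmax} together with minimality one has $\sigma_1=1$; the case where $n$ precedes $1$ is handled by the mirror argument using the cycled forms of Proposition~\ref{prop:count}, bearing in mind that $\rev$ does not preserve lexicographic order), unimodality gives an increasing part $1=\sigma_1<\dots<\sigma_t=n$. Let $k+1$ be the smallest value absent from this increasing part, so that $1,\dots,k$ open the word while $k+1$ sits in the decreasing tail. Membership in $\Lex{n}$, that is being of form one or two in~\eqref{eq:lex}, is exactly the condition that at most one value of the increasing part lies strictly between $k+1$ and $n$. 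If instead two such values $a_1<a_2$ occur, then $1,a_1,a_2,n,k+1$ is an occurrence of $13452$, and I would rewrite $\sigma$ by a sequence of forgotten moves that drives $k+1$ into the increasing run and evicts the surplus into the tail, producing an equivalent word that first differs from $\sigma$ at a smaller letter.

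The hard part is precisely constructing and verifying this last reduction: it is genuinely non-local and non-monotone. For instance $124563$ admits only the single move $124563\equiv 124635$, which raises it, yet six forgotten moves later one reaches the smaller word $123654\in\Lex{n}$; so no local certificate exists and the descent must pass through larger words. I would therefore either track an explicit cyclic rewriting on the window running from the block $1,\dots,k$ up to $n$, checking that inversions stay fixed (Remark~\ref{rem:inversion}) while the net effect lowers the word, or else bypass the explicit reduction by enumeration: once every class is known to contain at least one element of $\Lex{n}$, Propositions~\ref{prop:count} and~\ref{prop:lower} force $\classes(n)=n^2-3n+4$ together with a bijection between classes and $\Lex{n}$, after which it remains only to verify that the unique representative of a class is its lexicographic minimum.
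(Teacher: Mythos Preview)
Your treatment of the patterns $213$ and $312$ is fine and in the same spirit as the paper's: the paper reduces an arbitrary $213$/$312$ occurrence to a consecutive one by a minimal-distance argument and then applies a single lowering rewrite, whereas you go directly via ``no consecutive valley $\Leftrightarrow$ unimodal $\Leftrightarrow$ avoids $213$ and $312$.'' Both work, and yours is arguably cleaner.

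The gap is in the second half. You correctly diagnose that the $13452$/$34521$ reduction is non-local and non-monotone (your $124563$ example is exactly right), but you then offer only two un-executed options. The first, ``track an explicit cyclic rewriting on the window,'' is not described concretely enough to check; you would need to exhibit the actual sequence of moves and verify it terminates at a smaller word for every offending shape, and your own example shows this cannot be done by always-lowering moves. The second option is circular as you have stated it: ``once every class is known to contain at least one element of $\Lex{n}$'' is precisely the content of Proposition~\ref{prop:canonical} (it is what feeds Corollary~\ref{cor:upper}), so you cannot invoke it here. Even if you aimed only at the weaker statement ``each class meets $\Lex{n}$'' and postponed lex-minimality, you would still owe a proof of that weaker statement, which is where all the difficulty lies.

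The paper closes this gap by induction on $n$. Writing $\sigma=x\cdot w\cdot y$, both $x\cdot w$ and $w\cdot y$ are lex-minimal in their classes in $S_{n-1}$, hence inductively lie in $\Lex{n-1}$; this forces any $13452$ occurrence in $\sigma$ to use both $x$ and $y$, pinning down $x=1$, $y=2$, and the second letter to be $3$. The key step then exploits that, by induction, the full characterization of forgotten classes (Theorem~\ref{thm:classes}) already holds in $S_{n-1}$: an inversion count shows the suffix $3\,z\,\ast\,n\,\ast\,2$ is forgotten-equivalent to some $a\,b\,n\,2\,\ast$ with $a<b$, and then the consecutive five-letter block $1\,a\,b\,n\,2$ (pattern $13452$) can be rewritten to $1\,2\,n\,b\,a$ using only relations among those five letters, contradicting minimality of $\sigma$. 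This interleaving of Proposition~\ref{prop:canonical} with Corollary~\ref{cor:upper} inside a single induction on $n$ is the missing idea in your plan.
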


\begin{proof}
Let $\sigma$ be the minimal lexicographic word of a forgotten class of $S_n$.
We first prove that $\sigma$ avoids the patterns 312 and 213.
Consider a word that contains the patterns 312 or 213 and, among all subwords
having one of these patterns, consider one subword with minimal distance
between the two extreme letters. Let us assume that this subword is a 312
pattern, which we shall write as $x_3x_1x_2$.

There cannot be any letter in $\sigma$ between $x_3$ and $x_1$:
if this letter were greater than $x_2$, we would have a shorter pattern 312,
which contradicts the minimal distance assumption. The letter also cannot be
smaller than $x_1$ for the same reason and it cannot be in the interval of
values between $x_1$ and $x_2$ since we would have a shorter 213 pattern.
Therefore $x_3$ and $x_1$ are consecutive in $\sigma$.
For similar reasons, $x_1$ and $x_2$ are consecutive in $\sigma$. Analogous
arguments work if the minimal subword is a 231 pattern.

This shows that if $\sigma$ contains the pattern 312 or 213, there are three 
consecutive letters in $\sigma$ having one of those patterns. Hence $\sigma$ 
can be rewritten as a lexicographic smaller element via the forgotten
relations~\eqref{eq:forgotten}, which contradicts our assumption that $\sigma$ 
was minimal. Hence $\sigma$ must avoid the patterns 312 and 213 and we 
conclude that it is a $\Lambda$-word.

Let us now write $\sigma$ as $x.w.y$, where $x$ and $y$ are letters and $w$ is
a word. Since $\sigma$ is the smallest element of its forgotten class, $w.y$
and $x.w$ are the smallest elements in their forgotten classes as well. By
induction, they are in $\Lex{n}$ and hence avoid the patterns $13452$ and
$34521$. So if $\sigma$ has one of the two patterns, then both letters $x$ and
$y$ have to be part of it.
Let us first assume that $\sigma$ contains pattern $13452$. Then $x=1$ since 
$x<y$ and $\sigma$ is a $\Lambda$-word. In addition $y=2$, since otherwise the
letter next to $x$ would be smaller than $y$, and $w.y$ would have a 13452 
pattern as well. Also by induction hypothesis, we have that the first letter
of $w$ is $3$.
Putting together everything, we have that $\sigma$ is of the form
$1\, 3\, z\, *\, n\, *\, 2$ where $*$ represents any sequence of numbers,
possibly empty. We claim that there is a word in the forgotten class of
$3\, z\, *\, n\, *\, 2$ of the form 
$a\, b\, n\, 2\, *\,$    with $a<b$.
Since the word $1abn2$ is of the form $13452$, it can be rewritten into
$12nba$, the claim shows that $13z*n*2$ is not minimal in its class,
yielding a contradiction.

Now let us prove the claim. By induction on $n$ of this proposition,
using Corollary~\ref{cor:upper} below and Remarks~\ref{rem:inversion}
and~\ref{rem:inv}, the elements of a forgotten class
are characterized by their inversion number and the property that
either $1$ appears before $n$ or $n$ appears before $1$.
Among all words of the form $a\, b\, n\, 2\, *$, the one with minimal number of
inversions is $3\, 4\, n\, 2\, 5\, 6\dots n-1$  (which has $n-2$ inversions)
and the one with maximal number of inversions is
$n-2\, n-1\, n\, 2\, n-3\, n-4\dots 3$
(which has $\bin{n-2}{2}$ inversions).
Now, among all words of the form $\,a\,b\,n\,2\,*$ with $a<b$, there are permutations
with any number of inversions between those two values.
The number of inversions of $3\,z\,*\,n\,*\,2$ is necessarily inside this
interval. Indeed, among all lexicographically minimal words
of size $n-1$, the one with the smallest number of inversions of
the form $2\, z\, *\, n-1\, *\, 1$  is $2\, 3\, 4\, 5\, 6 ... n-1\, 1$ which has $n-2$
inversions and the one with the greatest number of inversions is
$2\, n-2\, n-1\, n-3 ... 4\, 3\, 1$ which has $\bin{n-2}{2}$ inversions.
Hence $\sigma$ cannot contain the pattern $13452$. A similar
argument works for pattern $34521$.
\end{proof}

\begin{corollary} 
\label{cor:upper}
The number of forgotten classes satisfies $\classes(n)\leq n^2-3n+4$.
\end{corollary}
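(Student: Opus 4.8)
The plan is to exhibit an injection from the set of forgotten classes of $S_n$ into the set $\Lex{n}$, after which the bound follows immediately from the cardinality computed in Proposition~\ref{prop:count}. The substantive work has already been carried out in Proposition~\ref{prop:canonical}, so the argument here amounts to organizing that result together with the count of canonical elements.

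First I would observe that every forgotten class $F$ is a finite nonempty subset of $S_n$ and therefore possesses a unique lexicographically minimal element, which I denote $m(F)$. By Proposition~\ref{prop:canonical}, this element $m(F)$ lies in $\Lex{n}$, so the assignment $F \mapsto m(F)$ defines a map from the collection of forgotten classes of $S_n$ to $\Lex{n}$. Next I would check that this map is injective. Since the forgotten relations~\eqref{eq:forgotten} generate an equivalence relation, the forgotten classes partition $S_n$, and hence each permutation belongs to exactly one class. In particular, if $F \neq F'$ are distinct classes, then their representatives $m(F)$ and $m(F')$ lie in different classes and are therefore distinct permutations, which is precisely injectivity.

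Combining the two facts, injectivity gives $\classes(n) \le |\Lex{n}|$, and Proposition~\ref{prop:count} supplies $|\Lex{n}| = n^2 - 3n + 4$, yielding the claimed inequality. The only point requiring any care is that the representative map be well defined, that is, that a single canonical element can be attached to each class; but this is guaranteed the moment we know from Proposition~\ref{prop:canonical} that the lexicographically minimal element of each class always falls inside $\Lex{n}$. I do not expect a genuine obstacle in this corollary, since all the difficulty is absorbed by the earlier pattern-avoidance analysis, and together with Proposition~\ref{prop:lower} this upper bound will in fact pin down $\classes(n)$ exactly.
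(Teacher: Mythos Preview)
Your argument is correct and matches the paper's own proof: both use Proposition~\ref{prop:canonical} to place the lexicographically minimal element of each class inside $\Lex{n}$, then invoke Proposition~\ref{prop:count} for the cardinality. Your version simply makes the injectivity explicit, whereas the paper leaves it implicit in the phrase ``at least one permutation of $\Lex{n}$ in each forgotten class.''
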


\begin{proof}
By Proposition~\ref{prop:canonical}, there is at least one permutation of
$\Lex{n}$ in each forgotten class. Combining this result
with Proposition~\ref{prop:count} yields the claim.
\end{proof}

\section{Characterization and properties}
\label{sec:characterization}

In this section we provide an explicit characterization of the forgotten
classes and derive several properties. The main result of this section is the
following theorem.

\begin{theorem}
\label{thm:classes}
The number of forgotten classes of $S_n$ is $\classes(n) = n^2-3n+4$.
More precisely, two permutations are forgotten-equivalent if and only if they
have same number of inversions and the letters $1$ and $n$ appear in the same
order in both permutations.
\end{theorem}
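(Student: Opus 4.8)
The plan is to deduce both assertions from the two bounds already established together with the two invariants, so that almost no new computation is needed. The counting statement is immediate: Proposition~\ref{prop:lower} gives $\classes(n)\ge n^2-3n+4$ and Corollary~\ref{cor:upper} gives $\classes(n)\le n^2-3n+4$, whence $\classes(n)=n^2-3n+4$. For the characterization, the ``only if'' direction is exactly Remarks~\ref{rem:inversion} and~\ref{rem:inv}: forgotten-equivalent permutations share their inversion number and the relative order of the letters $1$ and $n$.

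The substance lies in the converse, and I would handle it by packaging the two invariants into a single map and invoking a cardinality argument. Let $P$ denote the set of pairs (relative order of $1$ and $n$, inversion number) that are realized by some permutation of $S_n$. The analysis in the proof of Proposition~\ref{prop:lower} shows that $|P|=n^2-3n+4$: for each of the two possible orders of $1$ and $n$, the inversion number ranges over exactly $1+(n-1)(n-2)/2$ consecutive values, and each such value is genuinely attained (the explicit permutations $1\,n\,(n-1)\cdots 2$ and its reverse, together with intermediate permutations, realize the full range). Assigning to each permutation its invariant pair yields a surjection onto $P$; since that pair is constant on every forgotten class, the surjection factors through a well-defined surjective map $\psi$ from the set of forgotten classes onto $P$.

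Now $\psi$ is a surjection between two finite sets of the same cardinality $n^2-3n+4$, hence a bijection; in particular it is injective, so distinct forgotten classes carry distinct invariant pairs. Equivalently, any two permutations sharing both their inversion number and the relative order of $1$ and $n$ must lie in the same forgotten class, which is precisely the ``if'' direction. I do not expect a genuine obstacle here, since the hard work---the upper bound via the canonical elements and pattern avoidance of Proposition~\ref{prop:canonical}, and the inversion-range count---has already been carried out; the only point needing care is that every admissible pair is actually realized, so that $\psi$ is surjective rather than merely well-defined, and this is exactly what the permutations exhibited in Proposition~\ref{prop:lower} provide. The equal-cardinality pigeonhole then closes the argument.
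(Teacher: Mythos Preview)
Your proof is correct and follows essentially the same approach as the paper. The paper's own proof is extremely terse---it simply cites Corollary~\ref{cor:upper} and Proposition~\ref{prop:lower} for the count and Remarks~\ref{rem:inversion} and~\ref{rem:inv} for the characterization---leaving the pigeonhole argument that you spell out (the map $\psi$ to invariant pairs is a surjection between sets of equal size, hence a bijection) entirely implicit; you have faithfully unpacked what the paper expects the reader to supply.
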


\begin{proof}
The equality $\classes(n)=n^2-3n+4$ is a consequence of
Corollary~\ref{cor:upper} and Proposition~\ref{prop:lower}. The
characterization follows from Remarks~\ref{rem:inversion} and~\ref{rem:inv}.
\end{proof}

\begin{corollary}
\label{cor:inverse}
Let $\sigma$ be in $\Lex{n}$. Then $\sigma^{-1}$ is in the same forgotten
class as $\sigma$, namely $\sigma \equiv \sigma^{-1}$.
\end{corollary}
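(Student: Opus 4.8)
The plan is to read off $\sigma\equiv\sigma^{-1}$ from the characterization in Theorem~\ref{thm:classes}. Since both $\sigma$ and $\sigma^{-1}$ lie in $S_n$, they are forgotten-equivalent if and only if $\inv(\sigma)=\inv(\sigma^{-1})$ and the values $1$ and $n$ occur in the same relative order in the two words. So the whole task reduces to verifying these two conditions, and the hypothesis $\sigma\in\Lex{n}$ will only be needed for the second one.

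The first condition is the classical identity $\inv(\sigma)=\inv(\sigma^{-1})$, valid for every permutation. I would establish it by the standard bijection between inversions: writing $\sigma_i=\sigma(i)$, a pair of positions $j<i$ with $\sigma(j)>\sigma(i)$ is an inversion of $\sigma$, and setting $a=\sigma(i)$, $b=\sigma(j)$ turns it into a pair of values $a<b$ with $\sigma^{-1}(a)>\sigma^{-1}(b)$, i.e.\ an inversion of $\sigma^{-1}$. This map is a bijection, so the two inversion numbers agree, with no appeal to $\Lex{n}$.

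For the second condition I would use that every element of $\Lex{n}$ is a $\Lambda$-word, as recorded before \eqref{eq:lex}. The key observation is that in an increasing-then-decreasing word the global minimum is $\min(\sigma_1,\sigma_n)$, so the value $1$ must sit at one of the two ends. I then split into cases. If $\sigma_1=1$, then $1$ precedes $n$ in $\sigma$; moreover the relative order of $1$ and $n$ in $\sigma^{-1}$ is governed by the comparison of $\sigma(1)$ with $\sigma(n)$, because the position of the value $v$ in $\sigma^{-1}$ equals $\sigma(v)$, and $\sigma(1)=1<\sigma(n)$ forces $1$ to precede $n$ in $\sigma^{-1}$ as well. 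Symmetrically, if $\sigma_n=1$, then $n$ precedes $1$ in $\sigma$, and $\sigma(n)=1<\sigma(1)$ shows that $n$ also precedes $1$ in $\sigma^{-1}$. In either case the relative order matches, so Theorem~\ref{thm:classes} yields $\sigma\equiv\sigma^{-1}$.

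I do not anticipate a genuine obstacle, since this is essentially a direct application of Theorem~\ref{thm:classes}. The one point demanding care is the translation of ``relative order of $1$ and $n$ in $\sigma^{-1}$'' into a statement about the first and last letters of $\sigma$, which rests on the elementary fact that the position of the value $v$ in a permutation is the value at position $v$ of its inverse. Keeping the direction of that correspondence straight is the single place where a sign-type slip could occur.
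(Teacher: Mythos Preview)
Your proposal is correct and follows essentially the same route as the paper: invoke Theorem~\ref{thm:classes}, use the standard identity $\inv(\sigma)=\inv(\sigma^{-1})$, and verify that $1$ and $n$ appear in the same order in $\sigma$ and $\sigma^{-1}$. The only cosmetic difference is that the paper points to the explicit list~\eqref{eq:lex} for the order check, whereas you argue directly from the $\Lambda$-shape that $1$ sits at an endpoint; both yield the same conclusion.
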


\begin{proof}
We have that $\inv(\sigma)=\inv(\sigma^{-1})$. In addition, the letters 1 and
$n$ are in the same order in $\sigma$ and $\sigma^{-1}$ for $\sigma\in\Lex{n}$
as can be easily seen from their characterization~\eqref{eq:lex}.
Hence, by Theorem~\ref{thm:classes}, they must be in the same class.
\end{proof}

\begin{corollary}
The elements in $\Lex{n}$ and their inverses constitute canonical elements for
the coforgotten classes corresponding to the relations~\eqref{eq:coforgotten}. 
In addition, the number of standard coforgotten classes is $n^2-3n+4$.
\end{corollary}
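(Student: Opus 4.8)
The final statement concerns the coforgotten classes, defined via the relations~\eqref{eq:coforgotten}. The key observation that makes this tractable is already embedded in the paper's own description: the coforgotten relations are precisely the forgotten relations applied to inverse permutations. So the plan is to transport everything we know about forgotten classes through the bijection $\sigma \mapsto \sigma^{-1}$ on $S_n$.

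First I would make the duality precise. I claim that $\sigma \equiv_{\mathrm{co}} \tau$ (coforgotten-equivalent) if and only if $\sigma^{-1} \equiv \tau^{-1}$ (forgotten-equivalent). This should follow by inspecting how a single elementary forgotten rewriting on a word translates, under taking inverses, into a single elementary coforgotten rewriting, and vice versa. Concretely, a forgotten relation swaps the relative positions of values $i-1, i, i+1$ in certain configurations; passing to the inverse permutation interchanges the roles of positions and values, turning these into the positional patterns displayed in~\eqref{eq:coforgotten}. Once this generator-level correspondence is checked, it extends to the full equivalence by transitivity, and the map $\sigma \mapsto \sigma^{-1}$ becomes a bijection from forgotten classes to coforgotten classes.

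Given this bijection, the counting statement is immediate: the number of coforgotten classes equals the number of forgotten classes, which is $n^2-3n+4$ by Theorem~\ref{thm:classes}. For the canonical-elements claim, the natural candidates are exactly the inverses of the elements of $\Lex{n}$, since the inverse bijection sends canonical elements of forgotten classes to canonical elements of coforgotten classes. Here Corollary~\ref{cor:inverse} plays a clarifying role: for $\sigma \in \Lex{n}$ we have $\sigma \equiv \sigma^{-1}$, so $\sigma$ and $\sigma^{-1}$ lie in the same forgotten class. This means that the set $\{\sigma, \sigma^{-1} : \sigma \in \Lex{n}\}$ still meets each forgotten class, and transporting through the inverse map yields a system of representatives for the coforgotten classes. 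This is why the statement names \emph{both} the elements of $\Lex{n}$ and their inverses as canonical elements for the coforgotten classes.

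The main obstacle I anticipate is the first step: verifying that the generator-level correspondence between the forgotten and coforgotten rewritings is exact, with no off-by-one or orientation mismatch. One must be careful that the two lines of~\eqref{eq:coforgotten} correspond correctly to the two lines of~\eqref{eq:forgotten} after inversion, and that the \emph{consecutive-values} condition in the forgotten relations matches the \emph{consecutive-positions} structure appearing in the coforgotten relations under the position/value swap. Once this bookkeeping is settled, the rest is a formal transport of the already-established results for forgotten classes, so the remaining steps are routine.
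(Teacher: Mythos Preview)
Your proposal is correct and follows exactly the approach the paper intends: the corollary is stated without proof precisely because the paper has already remarked that the coforgotten relations are the forgotten relations on inverse permutations, so transporting Theorem~\ref{thm:classes} and Proposition~\ref{prop:canonical} along $\sigma\mapsto\sigma^{-1}$ gives the result, with Corollary~\ref{cor:inverse} explaining why both $\sigma$ and $\sigma^{-1}$ (for $\sigma\in\Lex{n}$) land in the same coforgotten class and hence both serve as canonical representatives. Your caution about the generator-level bookkeeping is well placed but routine; once checked, nothing further is needed.
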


The \emph{Sch\"utzenberger involution} $\#$ on a permutation $\sigma\in S_n$
is the composition of the two maps
\begin{itemize}
\item $\rev$ which reverses the order of all letters in $\sigma$;
\item complementation which changes letter $x$ to $n+1-x$ for all letters
$x$ in $\sigma$. 
\end{itemize}
For example,
\begin{equation}
  \# 842956137 = 379451862.
\end{equation}

\begin{corollary}
\label{cor:Schutz}
For any $\sigma\in S_n$, we have $\#\sigma \equiv \sigma$.
\end{corollary}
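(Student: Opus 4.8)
The plan is to reduce everything to Theorem~\ref{thm:classes}, which characterizes forgotten-equivalence by two invariants: two permutations of $S_n$ are forgotten-equivalent exactly when they have the same inversion number and the letters $1$ and $n$ occur in the same relative order. Thus it suffices to verify that the Schützenberger involution $\#$, the composition of $\rev$ and complementation, preserves both of these invariants.

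First I would treat the inversion number. The key observation is that each of the two maps composing $\#$ \emph{individually} sends $\inv$ to its complement $\bin{n}{2}-\inv$. Indeed, for $\rev$ every pair of positions $(i,j)$ with $i<j$ gets its relative order reversed, so inversions and non-inversions are interchanged; and complementation $x\mapsto n+1-x$ leaves positions fixed while reversing the order of the values sitting there, which again interchanges inversions with non-inversions. Hence each map sends $\inv$ to $\bin{n}{2}-\inv$, and the composition restores the original value, $\inv(\#\sigma)=\bin{n}{2}-(\bin{n}{2}-\inv(\sigma))=\inv(\sigma)$.

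Next I would check that $\#$ preserves the relative order of $1$ and $n$. Here I would again argue that each component map reverses that order, so that their composition preserves it. The map $\rev$ reverses the order of every pair of positions, in particular the positions occupied by $1$ and $n$. Complementation fixes all positions but interchanges precisely the two values $1$ and $n$ (since $1\mapsto n$ and $n\mapsto 1$), which likewise exchanges the configuration ``$1$ before $n$'' with ``$n$ before $1$''. As each of the two maps flips the relative order of these two extreme letters, the composition $\#$ leaves it unchanged.

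With both invariants shown to be preserved by $\#$, Theorem~\ref{thm:classes} immediately yields $\#\sigma\equiv\sigma$. I do not anticipate any genuine obstacle; the only point requiring a little care is the bookkeeping in the last step, namely verifying that complementation, which acts on \emph{values} rather than positions, nonetheless reverses the relative order of $1$ and $n$ because it swaps exactly those two values, so that combining it with the positional reversal $\rev$ produces a net preservation.
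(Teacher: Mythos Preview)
Your proposal is correct and follows essentially the same approach as the paper: reduce to Theorem~\ref{thm:classes} and verify that $\#$ preserves both the inversion number and the relative order of $1$ and $n$. The paper's proof asserts these two facts in a single sentence each, while you spell out explicitly that $\rev$ and complementation each reverse both invariants so that their composition preserves them; this is just a more detailed version of the same argument.
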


\begin{proof}
The composition of reversal and complementation does not change the
inversion number of a permutation. In addition, the letters 1 and $n$ stay
in the same order. Hence the corollary follows from Theorem~\ref{thm:classes}.
\end{proof}

Thanks to the characterization of forgotten classes as given in Theorem~\ref{thm:classes}, 
it is easy to find which $\Lambda$-shaped elements belong to the same forgotten class since
the number of inversions of a permutation of $\Lambda$-shape is easy to
compute: it is equal to the maximal number of inversions minus the difference
between $n$ and all the numbers to its left.

Let $\sigma\in S_n$ be a $\Lambda$-shape permutation such that 1 is to the left of $n$.
If there are two letters $(b,c)$ such that $1<b<c<n$ that belong to
the increasing part of $\sigma$ and such that $b-1$ is not in this part,
so that $\sigma$ is not in $\Lex{n}$,
then there is a smaller $\Lambda$-word with the same number of inversions:
look for the smallest number $d$ greater than $c$ that is not in the
increasing part of $\sigma$.
If it exists, then replace $(b,d-1)$ with $(b-1,d)$. Otherwise, replace
$b$ by $b-1$ and remove $n-1$ from the increasing part.
The case when $n$ is to the left of 1 is the same, replacing the
condition $1<b<c<n$ by $2<b<c<n$.

\begin{example} \label{ex-Leqs}
The following example illustrates the last construction.
The permutation 13567842 is forgotten-equivalent to 13468752: in that case,
$b=5$, $c=6$, and $d$ does not exist.
This last permutation is equivalent to 12568743, with $b=3$, $c=4$, and
$d=5$.
This last permutation is equivalent to 12478653, with $b=5$, $c=6$, and
$d=7$.
This last permutation is in turn equivalent to 12387654, with $b=4$, $c=7$, and
$d$ does not exist. All of these permutations have inversion number 10.
\end{example}

\section{The insertion algorithm}
\label{sec:insertion}

Let us now write the elements in~\eqref{eq:lex} more compactly as
\begin{equation}
\begin{split}
  \sigma(k,a) &= 1 2 \ldots k \;a \;n \\
  \tau(k,a) &= 2 3 \ldots k \;a \;n
\end{split}	
\end{equation}
for $1\le k<n$ and $k<a\le n$, where we identify $\sigma(k,n)=\sigma(k-1,k)$
and 
$\tau(k,n)=\tau(k-1,k)$.

\begin{lemma}
\label{lem:inv classes}
We have
\begin{equation}
\begin{split}
	\inv(\sigma(k,a)) & = \bin{n-k}{2}+a-n\\
	\inv(\tau(k,a)) &= \bin{n-k}{2} +a-1.
\end{split}
\end{equation}
\end{lemma}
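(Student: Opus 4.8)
The plan is to apply the inversion formula for $\Lambda$-shaped permutations recorded in Section~\ref{sec:characterization}: for a $\Lambda$-word the quantity $\inv$ equals the maximal number of inversions $\bin{n}{2}$ minus $\sum_{x}(n-x)$, where the sum ranges over all letters $x$ lying to the left of the peak $n$. Both $\sigma(k,a)$ and $\tau(k,a)$ are elements of $\Lex{n}$ and hence $\Lambda$-words, so this formula applies to each; the only real work is to identify which letters sit to the left of $n$ in each case, after which everything reduces to a short algebraic simplification.

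First I would treat $\sigma(k,a)=1\,2\,\ldots\,k\,a\,n\,\ldots$, where the letters to the left of the peak $n$ are exactly $1,2,\ldots,k,a$. The formula then gives
\[
\inv(\sigma(k,a)) = \bin{n}{2} - \sum_{x=1}^{k}(n-x) - (n-a).
\]
Expanding $\sum_{x=1}^{k}(n-x)=kn-\bin{k+1}{2}$, the part independent of $a$ collapses to $\bin{n-k}{2}$ by the identity $\frac{(n-k)(n-k-1)}{2}=\frac{n(n-1)}{2}-kn+\frac{k(k+1)}{2}$, leaving $\inv(\sigma(k,a))=\bin{n-k}{2}+a-n$, as claimed.

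For $\tau(k,a)$ the central observation is that it is still $\Lambda$-shaped: it is obtained from $\sigma(k,a)$ by cycling $1$ to the very end, and since $1$ is smaller than every letter of the descending tail, appending it merely continues the descent. Consequently the peak is still $n$, but now the letters to its left are precisely $2,3,\ldots,k,a$, i.e. the list from the $\sigma(k,a)$ case with the entry $x=1$ removed. Deleting the summand $(n-1)$ from the subtracted sum therefore increases the inversion number by exactly $n-1$, so $\inv(\tau(k,a))=\inv(\sigma(k,a))+(n-1)=\bin{n-k}{2}+a-1$.

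Finally I would check consistency with the boundary identifications $\sigma(k,n)=\sigma(k-1,k)$ and $\tau(k,n)=\tau(k-1,k)$: substituting $a=n$ into the $\sigma$-formula yields $\bin{n-k}{2}$, which agrees with $\bin{n-k+1}{2}+k-n$ upon using $\bin{n-k+1}{2}-\bin{n-k}{2}=n-k$, and the $\tau$-case is identical. No genuine obstacle arises; the argument is entirely a bookkeeping computation, the only step demanding care being the verification that $\tau(k,a)$ remains a $\Lambda$-word, which is exactly what legitimizes applying the same inversion formula in both cases.
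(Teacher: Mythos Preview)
Your argument is correct and is precisely a fleshed-out version of what the paper intends: the paper's own proof consists of the single sentence ``These formulas can easily be checked explicitly,'' and your use of the $\Lambda$-shape inversion formula from Section~\ref{sec:characterization} is the natural way to carry out that explicit check. The boundary verification is a nice addition but not strictly needed for the lemma as stated.
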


\begin{proof}
These formulas can easily be checked explicitly.
\end{proof}

Let us define the two subsets of $\Lex{n}$ depending on whether $\{1,n\}$
forms an inversion or not
\begin{equation}
\begin{split}
	\Lex{n,\sigma} &:= \{\sigma(k,a) \mid 1\le k<n, \; k<a\le n\}\\
	\Lex{n,\tau} &:= \{\tau(k,a) \mid 1\le k<n, \; k<a\le n\}.
\end{split}
\end{equation}
By Lemma~\ref{lem:inv classes}, there exist two bijections
\begin{equation}
\begin{split}
 & \inv_\sigma : \Lex{n,\sigma} \to \left\{0,1,\ldots, \bin{n-1}{2} \right\} \\
 & \inv_\tau : \Lex{n,\tau} \to \left\{n-1,n,\ldots, \bin{n}{2} \right\},
\end{split}
\end{equation}
where $\inv_\sigma$ (resp. $\inv_\tau$) is just the inversion number
restricted to the subset of permutations $\Lex{n,\sigma}$ (resp.
$\Lex{n,\tau}$).

The inverses of the maps $\inv_\sigma$ and $\inv_\tau$ can also be stated
explicitly.
Let $i\in \{0,1,\ldots,(n-1)(n-2)/2\}$. Pick the largest integer $m$ such that
$\bin{m}{2}\le i$ and write 
\begin{equation}
	i=\bin{m}{2}+b \quad \text{for $m\ge 1$ and $0\le b<m$}.
\end{equation}
Then $\inv_\sigma^{-1}(i)=\sigma(k,a)$ where $k=n-m-1$ and $a=n+b-m$.

Similarly, if $i\in  \{n-1,n,\ldots,n(n-1)/2\}$, pick $m$ and $b$ as before.
Then $\inv_\tau^{-1}(i)=\tau(k,a)$ where $k=n-m$ and $a=b+1$.

\begin{example}
Take $n=7$ and $i=13$. Then $m=5$ and $b=3$ and
\begin{equation}
 \inv_\sigma^{-1}(13) = 1576542 \quad \text{and} \quad 
 \inv_\tau^{-1}(13) = 2476531.
\end{equation}
\end{example}

The bijections $\inv_\sigma$ and $\inv_\tau$ can be used to define an
insertion algorithm $\omega' \leftarrow i$ of a letter $i\in
\{0,1,\ldots,n-1\}$ into a permutation $\omega' \in \Lex{n-1}$.
Call $\omega\in\Lex{n}$ the result of the insertion algorithm
$\omega' \leftarrow i$. Then
\begin{equation}
\omega = \omega' \leftarrow i = 
\begin{cases}	 
	\inv_\sigma^{-1}(\inv(\omega')+n-1-i) & \text{if $i\neq 0$ and $\omega'=\sigma(k',a')$}\\
	\inv_\tau^{-1}(\inv(\omega')+n-1-i) & \text{if $i=0$ and $\omega'=\sigma(k',a')$}\\
	\inv_\tau^{-1}(\inv(\omega')+n-1-i) & \text{if $i\neq n-1$ and $\omega'=\tau(k',a')$}\\
	\inv_\sigma^{-1}(\inv(\omega')+n-1-i) & \text{if $i=n-1$ and $\omega'=\tau(k',a')$}.
\end{cases}
\end{equation}

\begin{example}
Take $n=7$, $\omega'=\sigma(1,3)=136542$ and $i=0$. Then
$\inv(\omega')+n-1-i=7+7-1=13$, so that $\omega=\inv_\tau^{-1}(13)=2476531$.
More generally, varying $i$ we obtain
\begin{equation}
\begin{array}{|c|l|}
\hline
i & \omega' \leftarrow i\\ \hline
0 & 2476531\\
1 & 1476532\\
2 & 1376542\\
3 & 1276543\\
4 & 1267643\\
5 & 1257643\\
6 & 1247653\\ \hline
\end{array}
\end{equation}
\end{example}

\begin{remark}
\label{prop:insertion}
Let $\omega'\in\Lex{n-1}$ and $i\in \{0,1,\ldots,n-1\}$. Then, by
construction, the standardization of the word $\omega'i$ is in the same
forgotten class as
$\omega'\leftarrow i$.
\end{remark}

\section{Noncommutative forgotten elementary symmetric functions}
\label{sec:quotient}

Let us consider the quotient of the noncommutative elementary symmetric
functions
\begin{equation}
e_k(A) := \sum_{i_1>\dots>i_k\geq1} a_{i_1} a_{i_2} \dots a_{i_k}.
\end{equation}
by the general forgotten relations~\eqref{eq:forgottengen}.

\begin{theorem}
\label{thm:commut}
The quotient of the algebra generated by the $e_k$ by the forgotten relations
is isomorphic to the algebra of commutative symmetric functions.
\end{theorem}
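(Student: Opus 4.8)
The plan is to exhibit a canonical algebra homomorphism $\pi\colon \mathrm{Sym}\to \mathcal{E}$, where $\mathrm{Sym}$ denotes the algebra of commutative symmetric functions and $\mathcal{E}$ is the subalgebra generated by the images $\bar e_k$ of the $e_k(A)$ in the quotient $\mathbb{K}\langle A\rangle/I$ of the free algebra by the two-sided ideal $I$ generated by the forgotten relations~\eqref{eq:forgottengen}; then I would prove that $\pi$ is an isomorphism. Since the fundamental theorem of symmetric functions identifies $\mathrm{Sym}$ with the free \emph{commutative} algebra on generators $e_1,e_2,\dots$, the morphism $\pi$ sending $e_k\mapsto \bar e_k$ exists as soon as the $\bar e_k$ commute pairwise, and it is then automatically surjective onto $\mathcal{E}$. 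It remains to establish commutativity and injectivity.

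Commutativity is the heart of the matter, and the forgotten relations were designed to force it. Following the standard reduction in the spirit of Fomin--Greene~\cite{FG:1998}, commutativity of all the $e_k$ reduces to verifying the defining local identities on alphabets of two and three letters. These were already recorded in the introduction: the two-letter condition~\eqref{leteg} follows from $aba\equiv baa$ and $bab\equiv bba$, while the three-letter condition $a_2a_3a_1+a_1a_3a_2=a_3a_1a_2+a_2a_1a_3$ follows termwise from $acb\equiv bac$ and $bca\equiv cab$. Thus the forgotten relations imply precisely the local straightening rules needed, and the reduction lemma upgrades these to $E(s)E(t)=E(t)E(s)$ for the generating series $E(t)=\sum_k e_k(A)\,t^k$, which is equivalent to commutativity of all the $\bar e_k$.

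For injectivity I would pass to the commutative world. Every forgotten relation in~\eqref{eq:forgottengen} becomes a tautology once the letters are allowed to commute (for instance $aba$ and $baa$ both equal $a^2b$), so the abelianization map $\mathbb{K}\langle A\rangle\to \mathbb{K}[a_1,a_2,\dots]$ factors through the quotient, yielding $\mathrm{ab}\colon \mathbb{K}\langle A\rangle/I\to \mathbb{K}[a_1,a_2,\dots]$ with $\mathrm{ab}(\bar e_k)$ equal to the genuine commutative elementary symmetric function $e_k(a_1,a_2,\dots)$. The composite $\mathrm{ab}\circ\pi\colon \mathrm{Sym}\to\mathbb{K}[a_1,a_2,\dots]$ is then the classical inclusion of symmetric functions into polynomials, which is injective because the $e_k$ are algebraically independent over infinitely many variables. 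Hence $\pi$ is injective, and combined with surjectivity it is the desired isomorphism.

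The main obstacle is the commutativity step, and specifically the reduction lemma that turns the two- and three-letter identities into full commutativity of every pair $\bar e_k,\bar e_l$. The local checks themselves are routine and are essentially carried out in the introduction, but justifying that no further relations among longer monomials are needed requires the Fomin--Greene machinery or an explicit induction on the size of the alphabet; this is the ``standard property'' on which the whole argument rests. Injectivity, by contrast, is essentially free once one observes that the forgotten relations are commutatively trivial.
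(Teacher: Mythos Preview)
Your injectivity argument is fine and standard: the forgotten relations are commutatively trivial, so abelianization factors through the quotient and the composite with $\pi$ is the usual embedding of $\mathrm{Sym}$ into polynomials.

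The gap is in the commutativity step. You invoke a ``reduction in the spirit of Fomin--Greene'' to alphabets of two and three letters, but no such reduction applies here. The Fomin--Greene machinery rests on the hypothesis that distant generators commute, i.e.\ $a_ia_j=a_ja_i$ for $|i-j|\ge 2$, together with a local relation for adjacent generators; from this package they deduce $E(s)E(t)=E(t)E(s)$. In the forgotten monoid there are \emph{no} relations on words of length two, so $a_ia_j\not\equiv a_ja_i$ for any $i\ne j$, and the Fomin--Greene hypotheses fail outright. Nor is there a general principle reducing $e_ie_j=e_je_i$ to the single identity $e_1e_2=e_2e_1$: the two- and three-letter computations in the introduction explain why the forgotten relations were \emph{forced} by $e_1e_2=e_2e_1$, not why they \emph{suffice} for all pairs. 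You even flag this reduction as ``the standard property on which the whole argument rests,'' but it is precisely what is missing.

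The paper's proof does not attempt any such reduction. Instead it attacks $e_ie_j-e_je_i$ directly: the monomials appearing in $e_ie_j$ and in $e_je_i$ are exactly the words of descent composition $(i,j)$ and $(j,i)$ respectively, and the Sch\"utzenberger involution $\#$ is a bijection between these two sets. The key input is Corollary~\ref{cor:Schutz} (a consequence of the classification Theorem~\ref{thm:classes}), which says $\#\sigma\equiv\sigma$ in the forgotten monoid; hence each monomial of $e_ie_j$ is forgotten-equivalent to its image in $e_je_i$, and the difference vanishes. The extension to words with repeated letters uses that the set of descent compositions in a (non-standard) forgotten class is closed under reversal. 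Thus the paper's argument relies on the global structure of the forgotten classes established in Sections~\ref{sec:invariances}--\ref{sec:characterization}, not on any local Fomin--Greene-type lemma.
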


\begin{proof}[Sketch of proof]
As stated in the introduction, it suffices to prove that all $e_k$ commute.
The forgotten relations were chosen such that $e_1$ and $e_2$ commute.

Now, consider the general case: let us evaluate $e_i(A)e_j(A)-e_j(A)e_i(A)$ in
the forgotten monoid.
The noncommutative words appearing in this expression are of two sorts: words
with repetition and words without repetition.

Let us begin with the words without repetition. Since the congruence only
takes into account the mutual ordering of the words, we can restrict ourselves
to permutations. Then there is a simple bijection between the noncommutative
words appearing in $e_ie_j$ and the words appearing in $e_je_i$, that is the
Sch\"utzenberger involution.
By Lemma~\ref{cor:Schutz}, those two words are equivalent and hence their
difference is $0$ in the quotient of the algebra by the forgotten monoid.

Recall that the descent composition of a word is the sequence of lenghts of
all maximal nondecreasing factors of $w$. The property of equivalence of a
permutation and its image by the Sch\"utzenberger involution proves indeed a
more general fact: since the descent composition of $\#\sigma$ is equal to
the reversal of the descent composition of $\sigma$, the set of all
descent composition of a forgotten class is invariant by the operation
consisting in reading compositions from right to left. 
The same property holds on non-standard forgotten classes, so that
the case of words with repetition is solved in the same way.
\end{proof}

\section{Applications to Quasi-symmetric functions}

\def\F{{\bf F}}
\def\R{{\bf R}}

Let
\begin{equation}
	F_{n,D}(x) = \sum_{\substack{i_1\le \cdots \le i_n\\ i_j=i_{j+1} \Rightarrow j\not\in D}}
	 x_{i_1} \cdots x_{i_n}
\end{equation}
be the quasi-symmetric functions first introduced by
Gessel~\cite{Gessel:1984}, defined for any subset $D$ of $\{1,\dots,n-1\}$.
Recall that there is a simple bijection between these subsets and compositions
of $n$ (that is, all positive sequences of sum $n$): send a composition
$I=[i_1,\dots,i_p]$ to $D:=\{i_1,i_1+i_2,\dots,i_1+\dots+i_{p-1}\}$.
We then say that $I$ is the descent composition of $D$.

One of the crucial observations in Assaf's thesis~\cite{Assaf:2007} is the
fact that the sum of Gessel's quasi-symmetric functions for the descents of
the vertices of so-called dual equivalence graphs yield Schur functions. Hence
as a corollary, it follows~\cite[Corollary 3.12]{Assaf:2007} that the
generating function of dual equivalence graphs with a constant statistics on
connected components is a symmetric function and Schur positive.

In this section we prove a similar result conjectured by
Zabrocki~\cite{Zabrocki:2007} and independently proved by 
Assaf~\cite{Assaf:2007a}, stating the expansion of Gessel's quasi-symmetric
functions over a forgotten class as a nonnegative multiplicity-free sum of
skew-ribbon Schur functions. 

Let $\M_{k,+}$ (resp. $\M_{k,-}$) be the set of permutations $\sigma$ such
that $\maj(\sigma^{-1})=k$ and such that $n-1$ comes before (resp. after) $n$.
Here $\maj(\sigma)=\sum_{i\in\Des(\sigma)} i$ is the major index and 
$\Des(\sigma) = \{ i \mid 1\le i<n, \sigma_i>\sigma_{i+1}\}$ is the descent
set of $\sigma$. Let us also define the set $\C_{k,+}$ (resp. $\C_{k,-}$) of
compositions $c=(c_1,\ldots,c_\ell)$ with major index
$\maj(c)=\sum_{i=1}^\ell (\ell-i) c_i=k$ that either end in $1$ (resp. do not
end in $1$).
Then we have 
\begin{equation}
	\M_{k,\pm} = \bigcup_{I\in \C_{k,\pm}} \{\sigma \mid \Recoil(\sigma)=I\},
\end{equation}
where $\Recoil(\sigma)$ is the recoil composition of $\sigma$, or
equivalently the descent composition of $\sigma^{-1}$.

\begin{theorem} \label{thm:ribbon}
Let $S$ be a forgotten class. Then
\begin{equation}
	\sum_{\sigma\in S} F_{n,\Des(\sigma)}(x) = \sum_{I\in \C_{k,\sign(S)}} r_I,
\end{equation}
where $r_I$ is the ribbon Schur function and $\sign(S)=+$ if $1$ comes before
$n$ in $S$ and $\sign(S)=-$ otherwise. In particular, $\sum_{\sigma\in S}
F_{n,\Des(\sigma)}(x)$ is a symmetric function.
\end{theorem}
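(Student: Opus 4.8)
The plan is to expand both sides in Gessel's fundamental basis $\{F_{n,D}\}$, indexed by subsets $D\subseteq\{1,\dots,n-1\}$, which is linearly independent in degree $n$. For the right-hand side I would invoke the classical expansion of a ribbon Schur function,
\begin{equation}
  r_I=\sum_{\substack{\sigma\in S_n\\ \Recoil(\sigma)=I}}F_{n,\Des(\sigma)},
\end{equation}
obtained by specializing $s_\lambda=\sum_T F_{n,\Des(T)}$ to ribbon shapes. Summing this over $I\in\C_{k,\sign(S)}$ and using the identity $\M_{k,\pm}=\bigcup_{I\in\C_{k,\pm}}\{\sigma\mid\Recoil(\sigma)=I\}$ recorded above the theorem collapses the right-hand side to $\sum_{\sigma\in\M_{k,\sign(S)}}F_{n,\Des(\sigma)}$. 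By linear independence of the $F_{n,D}$, the theorem is then \emph{equivalent} to the multiset identity
\begin{equation}
  \{\Des(\sigma)\mid\sigma\in S\}=\{\Des(\sigma)\mid\sigma\in\M_{k,\sign(S)}\},
\end{equation}
that is, to the assertion that for each descent set $D$ the class $S$ and the set $\M_{k,\sign(S)}$ contain equally many permutations with descent set $D$.

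To set up the comparison I would use Theorem~\ref{thm:classes} to write $S=\{\sigma\mid\inv(\sigma)=v,\ \text{$1$ and $n$ in the order fixed by }\sign(S)\}$, where $v$ is the common inversion number; comparing the range of $\inv$ on $\Lex{n,\sigma}$ and $\Lex{n,\tau}$ with the values of $\maj$ on compositions ending, respectively not ending, in $1$ shows that the correct index is $k=v$ (the two sign labels being interchanged). Passing to inverses $\sigma\mapsto\sigma^{-1}$ --- which fixes $\inv$ and exchanges $\Des$ with $\Recoil$ --- turns the required descent-multiset identity, \emph{with the order conditions temporarily dropped}, into the statement that $\inv$ and $\maj$ are equidistributed on every recoil class $\{\rho\mid\Recoil(\rho)=I\}$. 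This is precisely the classical theorem of Foata and Sch\"utzenberger, so the unrestricted version of the identity is immediate.

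The genuine difficulty, which I expect to be the main obstacle, is to carry the two \emph{order} conditions through this equidistribution. After inverting, the condition cutting out $S$ becomes ``$\rho_1<\rho_n$'' (first entry below last), whereas the condition cutting out $\M_{k,\sign(S)}$ becomes ``$n-1\notin\Des(\rho)$''; these are genuinely different side conditions on the same permutation $\rho$, so the plain Foata--Sch\"utzenberger bijection need not respect them, and one must show the refined counts still agree. My plan here is to exploit the two available symmetries: the reversal $\rev$, which interchanges the two signs while sending $\inv$ to $\bin{n}{2}-\inv$ (Remark~\ref{rem:inv}), and the Sch\"utzenberger involution $\#$, which preserves each forgotten class and reverses the descent composition (Corollary~\ref{cor:Schutz}). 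Combining these with the recoil-refined equidistribution should pin the two sign-components against each other and yield the refined multiset identity, completing the proof; in particular the right-hand side, being a sum of ribbon Schur functions, is then manifestly symmetric.
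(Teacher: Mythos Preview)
Your framework coincides with the paper's: both expand the right-hand side via
\[
r_I=\sum_{\Recoil(\sigma)=I}F_{n,\Des(\sigma)},
\]
reduce the theorem to a descent-preserving bijection between $S$ and $\M_{k,\sign(S)}$, and invoke Foata's bijection $\FTT$ (equivalently, the Foata--Sch\"utzenberger equidistribution of $\inv$ and $\maj$ on recoil classes). You also correctly isolate the crux: after passing to inverses, one must match the side condition ``$\rho_1<\rho_n$'' on the $S$-side with ``$n-1\notin\Des(\rho)$'' on the $\M$-side.

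The gap is precisely at this crux. Your plan to ``combine'' the symmetries $\rev$ and $\#$ with the unrestricted equidistribution is not an argument: $\rev$ changes both $k$ and the recoil class (to its conjugate), while $\#$ reverses the recoil composition and sends the condition $n-1\in\Des(\rho)$ to $1\in\Des(\rho)$ rather than swapping the two sign-components; I do not see how any combination of these yields the needed refinement at fixed $D$ and fixed $k$. More importantly, your premise that ``the plain Foata--Sch\"utzenberger bijection need not respect'' the two order conditions is wrong, and this is exactly what the paper exploits. The paper observes the elementary property
\[
\FTT(\sigma)_1<\FTT(\sigma)_n \quad\Longleftrightarrow\quad \sigma_{n-1}<\sigma_n,
\]
which, after conjugating by inversion to form $\NFT(\sigma)=(\FTT(\sigma^{-1}))^{-1}$, becomes: $1$ precedes $n$ in $\NFT(\sigma)$ iff $n-1$ precedes $n$ in $\sigma$. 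Thus $\NFT$ is itself a descent-preserving bijection $\M_{k,\pm}\to S$, and no extra symmetry juggling is needed. (Incidentally, the signs are not ``interchanged'' as you suggest: $\sign(S)=+$ matches $\M_{k,+}$ and $\C_{k,+}$ directly.) Replacing your last paragraph with this one-line property of $\FTT$ closes the argument.
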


We provide a combinatorial proof of Theorem~\ref{thm:ribbon}.

\begin{proof}
Let $\FTT$ be the second fundamental transformation of Foata~\cite{F:1968}.
It is well-known that $\FTT$ is a bijection, and that
\begin{equation}
\label{eq0}
	\Recoil(\FTT(\sigma)) = \Recoil(\sigma) \qquad\text{and}\qquad
	\inv \FTT(\sigma) = \maj(\sigma).
\end{equation}
We shall make use of a very trivial fact about $\FTT$:
\begin{equation}
\label{eq1}
	\FTT(\sigma)_1 < \FTT(\sigma)_n \quad \Longleftrightarrow \quad 
\sigma_{n-1}<\sigma_n.
\end{equation}

Let us now consider the map
\begin{equation}
	\NFT (\sigma) := (\FTT(\sigma^{-1}))^{-1}.
\end{equation}
The inverse map interchanges descents and recoils. Hence, since $\FTT$
preserves recoils, $\NFT$ preserves the descent composition
\begin{equation}
	\Des(\NFT(\sigma)) = \Des(\sigma).
\end{equation}
Moreover, $\inv(\sigma^{-1})=\inv(\sigma)$, so that
\begin{equation}
	\inv \NFT(\sigma) = \maj(\sigma^{-1}).
\end{equation}
Finally, Equation~(\ref{eq1}) becomes
\begin{equation}
\label{eq1b}
	\text{1 comes before $n$ in } \NFT(\sigma) \quad \Longleftrightarrow \quad
	\text{$n-1$ comes before $n$ in } \sigma.
\end{equation}

Now, let $k$ be the number of inversions of any element of $S$ and let us
consider that $1$ is before $n$ in any element of $S$, the other case being
similar.

Recall the set $\M_{k,+}$ of permutations $\sigma$ such that
$\maj(\sigma^{-1})=k$ and such that $n-1$ is before $n$.
Then this set is the disjoint union of the recoil classes $\C_{k,+}$ of the
compositions $I$ with major index $\maj(I)$ equal to $k$. Thanks to the
previous properties, the image of this set under $\NFT$ is exactly the set of
permutations with $k$ inversions and such that 1 is before $n$, that is,
the forgotten class $S$.

Since the bijection $\NFT$ preserves the descents of the permutations, we have
\begin{equation}
\sum_{\sigma\in S} F_{n,\Des(\sigma)}
= \sum_{\tau\in \NFT^{-1}(S)} F_{n,\Des(\tau)}
= \sum_{\tau\in \M_{k,+}} F_{n,\Des(\tau)}.
\end{equation}

But this sum is easy to compute: consider
\begin{equation}
	\sum_{\substack{\sigma\\ \Recoil(\sigma)=I}} \F_{\sigma},
\end{equation}
in the Hopf algebra $\FQSym$ of Free Quasi-Symmetric functions realized as
noncommutative polynomials in the free algebra. Then taking the commutative
image of this expression gives
\begin{equation}
	\sum_{\substack{\sigma\\ \Recoil(\sigma)=I}} F_{n,\Des(\sigma)}.
\end{equation}
But this sum is also equal to its realization in terms of the noncommutative
ribbon Schur function $\R_I$ whose commutative image is the usual ribbon Schur
function $r_I$. Hence
\begin{equation}
	\sum_{\substack{\sigma\\ \Recoil(\sigma)=I}} F_{n,\Des(\sigma)} = r_I.
\end{equation}
This shows indeed that 
\begin{equation}
	\sum_{\sigma\in S}F_{n,\Des(\sigma)} = \sum_{I\in \C_{k,+}} r_I
\end{equation}	
is a multiplicity-free sum of ribbon Schur functions.
\end{proof}

There is a very simple way to compute directly from a given
forgotten class the set of compositions appearing in its evaluation on
$\QSym$: consider the set of words defined by applying the inverses to 
$V$-permutations, that is permutations satisfying $\sigma_1>\sigma_2>\cdots
>\sigma_i<\sigma_{i+1}<\cdots<\sigma_n$ for some $1\le i\le n$. Those are
exactly the words such that each prefix ends by either its smallest letter or
its greatest letter. Then, by definition of the bijection $\FTT$, those
elements are fixed points under $\FTT$, so that the $V$-permutations are
fixed-points under the $\NFT$ bijection.
Now, since the $V$-permutations have all different recoil compositions,
the sum over the whole forgotten class of $F_{n,\Des(\sigma)}$ is equal to the
sum of the $r_I$, where $I$ are the recoil compositions of the
$V$-permutations of this forgotten class. Alternatively, conjugating by the 
Sch\"utzenberger involution $\#$ using Corollary~\ref{cor:Schutz},
the sum over the whole forgotten class of $F_{n,\Des(\sigma)}$ is equal to the
sum of the $r_I$, where $I$ runs over the set of reversed recoil compositions
of the $\Lambda$-permutations of this forgotten class.

\begin{example}
Let us compute the evaluation of the two forgotten classes of $S_8$ with $10$
inversions. The two canonical words are $12387654$ and $23458761$.

Using Example~\ref{ex-Leqs}, one can check that the $\Lambda$-permutations
belonging to the forgotten class of $12387654$ are
\begin{equation}
12387654,\ 12478653,\ 12568743,\ 13468752,\ 13567842.
\end{equation}
Their recoil composition are
$(41111)$, $(3212)$, $(3131)$, $(2321)$, $(224)$ respectively, so that
the evaluation of the forgotten class of $12387654$ is
\begin{equation}
r_{11114} + r_{2123} + r_{1313} + r_{1232} + r_{422}.
\end{equation}

One also checks that the $\Lambda$-permutations belonging to the forgotten
class of $23458761$ are
\begin{equation}
23458761, 23467851.
\end{equation}
Since their recoil composition are $(1511)$ and $(143)$ respectively, the
evaluation of the forgotten class of $23458761$ is
\begin{equation}
r_{1151} + r_{341}.
\end{equation}

One can easily check that the seven ribbons listed here exactly are the seven
compositions with major index equal to 10.
\end{example}


\end{document}